\newcommand{\RR}{{\mathbb{R}}}
\newcommand{\CC}{{\mathbb{C}}}
\newcommand{\cL}{{\mathcal{L}}}
\newcommand{\cF}{{\mathcal{F}}}
\newcommand{\cH}{{\mathcal{H}}}
\newcommand{\Dom}{{\operatorname{Dom}}}
\newcommand{\cf}{\emph{cf.}}
\newcommand{\eps}{\varepsilon}
\newcommand{\ov}{\overline}
\newcommand\la{\lambda}
\renewcommand{\d}{{\textrm{d}}}
\renewcommand\eps{\varepsilon}
\newcommand\sd{\sigma_{\rm disc}}
\newcommand\sess{\sigma_{\rm ess}}
\newcommand\mydot{\,\cdot\,}
\newcommand\ds{\displaystyle}
\newcommand\wh{\widehat}
\theoremstyle{plain} 
\newtheorem{theorem}{Theorem}
\newtheorem{lemma}{Lemma}[section]
\theoremstyle{definition}
\theoremstyle{remark}
\newcommand{\normeq}[1]{
	{\left\vert\kern-0.25ex\left\vert\kern-0.25ex\left\vert #1 
		\right\vert\kern-0.25ex\right\vert\kern-0.25ex\right\vert}}
\lbrace\begin{array}{r@{\hspace{1mm}}ll}}%
\begin{document}
%
\title{\textbf{\Large
Discrete eigenvalues of the spin-boson Hamiltonian 
with two photons: on a problem of Minlos and Spohn
}}

\author{Orif Ibrogimov}
\date{\small 
\begin{quote}
\emph{Department of Mathematics, Faculty 
	of Nuclear Sciences and Physical 
	Engineering, Czech Technical University,
	Trojanova 13, 12000 Prague 2, Czech 
	Republic; ibrogori@fjfi.cvut.cz.
}
\end{quote}
25 February 2019}
\maketitle
\begin{abstract}
\noindent
Under minimal regularity conditions on the photon dispersion and 
the coupling function, we prove that the spin-boson model with 
two massless photons in~$\RR^d$ cannot have more than two bound 
state energies whenever the coupling strength is 
sufficiently strong. 
\end{abstract}
%
\footnotetext{\emph{Keywords}. Spin-boson Hamiltonian, 
	Fock space, spectrum, 
	operator matrix, Schur complement.}
\footnotetext{\emph{2010 Mathematics Subject Classification}. 
	81Q10, 47A10, 70F07, 47G10.}
%
\section{Introduction}\label{sec:intro}
\noindent 
In this paper we are concerned with the discrete spectrum analysis
for the Hamiltonian of a quantum mechanical model which 
describes the interaction between a two-level atom and two massless
photons. The energy operator is obtained from the spin-boson 
Hamiltonian by the compression onto the subspace of two bosons and 
acts on the Hilbert space  which is given by the tensor product of 
$\CC^2$ and the truncated Fock space
\begin{equation}
	\cF_s^2:=\CC \oplus L^2(\RR^d) \oplus L^2_s(\RR^d\times\RR^d).
\end{equation}
Here $L^2_s(\RR^d\times\RR^d)$ stands for the subspace of the 
Hilbert space $L^2(\RR^d\times\RR^d)$ consisting of symmetric 
functions and equipped with the inner product 
\begin{equation}\label{inner.prod.in.Ls2}
	(\phi,\psi)=\frac{1}{2}\int_{\RR^d}\int_{\RR^d}\phi(k_1,k_2)\ov{\psi(k_1,k_2)} \,\d k_1 \d k_2, \quad \phi,\psi\in L_s^2(\RR^d\times\RR^d).
\end{equation}
For $f=\bigl(f^{(\sigma)}_0, f^{(\sigma)}_1, f^{(\sigma)}_2\bigr)\in \CC^2\otimes\cF_s^2$, where $\sigma=\pm$ is the discrete variable, the Hamiltonian of our system is given by the formal expression 
\begin{equation}\label{Hamiltonian}
	\begin{aligned}
		(H_{\alpha}f)^{(\sigma)}_0&=\sigma\eps f^{(\sigma)}_0
		+\alpha\int_{\RR^d}\la(q)f^{(-\sigma)}_1(q)\,\d q,\\
		(H_{\alpha}f)^{(\sigma)}_1(k)&
		=(\sigma\eps+\omega(k))f^{(\sigma)}_1(k)
		+\alpha\la(k)f^{(-\sigma)}_0
		+\alpha\int_{\RR^d}f^{(-\sigma)}_2(k,q)\la(q)\,\d q,\\
		(H_{\alpha}f)^{(\sigma)}_2(k_1,k_2) &
		= (\sigma\eps+\omega(k_1)+\omega(k_2))f^{(\sigma)}_2(k_1,k_2)
		+\alpha\la(k_1)f^{(-\sigma)}_1(k_2)
		+\alpha\la(k_2)f^{(-\sigma)}_1(k_1). 
	\end{aligned}
\end{equation}
Here $\eps$ ($\eps>0$) and $-\eps$ are the excited and the ground state energies of the atom, respectively, $\omega(k)=|k|$ is the photon dispersion relation, $\alpha>0$ is the coupling constant and $\la$ is the coupling function which is given by the product of $\sqrt{\omega(k)}$ with a cut-off function for large $k$. 

In general, the dispersion relation $\omega\geq0$ and the coupling 
function~$\la$ are fixed by the physics of the problem. Motivated 
by different applications of the spin-boson Hamiltonian
one considers them as free parameter functions and imposes only 
some general conditions such as 
\begin{equation}\label{cond:la.la.sqrt.in.L2}
	\la\in L^2(\RR^d), \quad \frac{\la}{\sqrt{\omega}}\in L^2(\RR^d),
\end{equation}
provided that 
\begin{equation}\label{zero.mass}
	\inf_{k\in\RR^d}\omega(k)=0.
\end{equation}
Most of the work on the spectrum of the spin-boson model in the up-to-date 
literature assumes at least~\eqref{cond:la.la.sqrt.in.L2}, or its various 
strengthened versions, where, for example, the second condition in 
\eqref{cond:la.la.sqrt.in.L2}  is replaced by the requirement that
\begin{equation}\label{cond2:la.omega.in.L2}
	\frac{\la}{\omega}\in L^2(\RR^d),
\end{equation}
which is known as the \emph{infrared regularity condition} 
(\cf~\cite{Hirokawa-2001RMP}).

We notice that the natural domain of the unperturbed operator~$H_0$ 
is given by the tensor product of~$\CC^2$ 
with~$\CC\oplus\cH_1\oplus\cH_2$, where~$\cH_1$ and~$\cH_2$ are 
the weighted $L^2$-Hilbert spaces
\begin{equation}\label{Dom:H1}
	\cH_1:=\Big\{f\in L^2(\RR^d): 
	\int|\omega(k)|^2|f(k)|^2\,\d k<\infty\Big\}
\end{equation}
and
\begin{equation}\label{Dom:H2}
	\cH_2:=\Big\{g\in L^2_s(\RR^d\times\RR^d): 
	\int|\omega(k_1)+\omega(k_2)|^2|g(k_1,k_2)|^2\,\d k_1\d k_2
	<\infty\Big\}.
\end{equation}
The first condition in \eqref{cond:la.la.sqrt.in.L2} implies the 
boundedness of the perturbation $H_\alpha-H_0$ and thus the expression 
for~$H_{\alpha}$ given in \eqref{Hamiltonian} generates a self-adjoint 
operator in the Hilbert space~$\CC^2\otimes\cF_s^2$ on the natural 
domain of $H_0$ (see \cite[Theorem~V.4.3]{Kato}). Throughout the paper 
we denote the corresponding self-adjoint operator again by~$H_{\alpha}$ 
for notational convenience. The spatial dimension, $d\geq1$, plays no 
particular role in our analysis and is left arbitrary. 

Starting with the pioneering work of H\"ubner and 
Spohn~\cite{Huebner-Spohn-1995}, 
spectral properties of the spin-boson model as well as of its 
finite photon approximations have been investigated extensively and the 
corresponding literature is enormous. In contrast to rigorous results 
of the weak coupling regime, it seems that spectral properties of the 
spin-boson Hamiltonian (even with particle number cut-off) for 
general coupling have not been fully understood yet. The interested 
reader is referred, for example, to \cite{Huebner-Spohn-1995-review, 
	Huebner94atominteracting, Arai-Hirokawa-1997, 
	Derezinski-Gerard-Rev.Amth.Phys-1999, Georgescu-Gerard-Moller-CMP2004, 
	Hasler-Herbst-2011}), 
where the location of the essential spectrum is given for any value of 
the coupling constant, 
and results including the finiteness of the point spectrum or absence of 
the singular continuous spectrum 
are proven under various assumptions in addition 
to~\eqref{cond:la.la.sqrt.in.L2}. 
The ergodicity of the spin-boson Hamiltonian at arbitrary 
coupling strength was studied in~\cite{Merkli-CMP2015}. In the recent 
work \cite{Ibrogimov-AHP2018} we have obtained an explicit description 
of the essential spectrum and proved the finiteness of the discrete 
spectrum for the spin-boson model with two photons for arbitrary coupling 
where the only requirement on the coupling function was its square 
integrability. 

It is well known that, under appropriate conditions in addition to 
\eqref{cond:la.la.sqrt.in.L2}-\eqref{cond2:la.omega.in.L2}, 
there exists a sufficiently small coupling constant 
$\alpha_0>0$ such that for all $\alpha\in (0,\alpha_0)$ the self-adjoint 
operator generated by \eqref{Hamiltonian} has a unique discrete eigenvalue 
(see \cite{Minlos-Spohn-1996, Huebner-Spohn-1995}). Moreover, 
it is also well known that, if the coupling is ``weak but not very weak'', 
then one further eigenvalue appears. This is the case for any finite photon 
approximation of the spin-boson Hamiltonian 
(see~\cite{Angelescu-Minlos-Ruiz-Zagrebnov-2008, Huebner94atominteracting}). 
Unlike these observations, arbitrary coupling results of the up-to-date 
literature in this direction guarantee only finiteness of the number of 
discrete eigenvalues for a given coupling constant $\alpha>0$ 
(\cf~\cite{Ibrogimov-AHP2018}). In fact, 
it has been an open problem 
for a long time whether the spin-boson Hamiltonian 
with two photons can have more than two bound state energies 
for some (i.e.~strong) coupling (\cf~\cite[p.~192]{Minlos-Spohn-1996} and 
\cite[p.~8]{Huebner94atominteracting}). 
It is the goal of the present paper to answer this question 
for strong coupling by proving the following claim which holds under 
considerably relaxed conditions on the parameter functions.
\begin{theorem}\label{thm:main}
	Let $\la\in L^2(\RR^d)$ and let $\omega\colon\RR^d\to[0,\infty)$ 
	be an unbounded and almost everywhere continuous function 
	satisfying \eqref{zero.mass}. Then the Hamiltonian 
	$H_{\alpha}$ cannot have more than two bound state energies 
	for sufficiently strong coupling strength $\alpha>0$.
\end{theorem}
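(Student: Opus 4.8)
The plan is to analyze $H_\alpha$ through its block structure relative to the photon number. The operator $H_\alpha$ acts on $\CC^2\otimes\cF_s^2 = (\CC^2\otimes\CC)\oplus(\CC^2\otimes L^2)\oplus(\CC^2\otimes L^2_s)$, and the spin flip $\sigma\mapsto-\sigma$ entangles the two spin components. The essential spectrum of $H_\alpha$ (known from \cite{Ibrogimov-AHP2018} for $\la\in L^2$ and continuous $\omega$ with \eqref{zero.mass}) has a bottom, say $\mu_{\mathrm{ess}}(\alpha)$, and by \eqref{zero.mass} one expects $\mu_{\mathrm{ess}}(\alpha)$ is essentially governed by the one-photon threshold, while the two-photon sector contributes a threshold pushed up by an extra $\inf\omega=0$ copy, hence coincides with the one-photon bottom too. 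The key quantitative point is that bound state energies of $H_\alpha$ lying \emph{below} $\sess(H_\alpha)$ are discrete eigenvalues, and I want to bound their number by $2$.

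First I would diagonalize the purely atomic part: the $2\times 2$ matrix $\sigma\eps$ coupled across sectors suggests passing to the $\pm$ eigenbasis of the spin, or equivalently conjugating by the unitary that symmetrizes/antisymmetrizes $f^{(+)}$ and $f^{(-)}$; but since $\omega$ is spin-independent this does not fully decouple. Instead, the workhorse should be the Schur complement (Feshbach map): fix a candidate eigenvalue $z<\sess(H_\alpha)$, eliminate the two-photon sector (where $H_\alpha-z$ restricted to $\CC^2\otimes L^2_s$ is boundedly invertible because $\omega(k_1)+\omega(k_2)+\sigma\eps - z \geq c>0$ away from the spectrum), obtaining an effective operator $M_2(z)$ on $\CC^2\otimes(\CC\oplus L^2)$. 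Then eliminate the zero-photon sector ($\CC^2$, finite-dimensional) to land on an effective operator $M_1(z)$ acting on the single-photon space $\CC^2\otimes L^2(\RR^d)$. The number of eigenvalues of $H_\alpha$ below $\sess(H_\alpha)$ equals the number of values $z$ for which $M_1(z)$ has $0$ in its spectrum, counted with the usual Birman–Schwinger/monotonicity bookkeeping, i.e.\ it is controlled by $\dim\ker$ contributions of $M_1(z)$.

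The heart of the matter is then to show $M_1(z)$ — a one-photon Schrödinger-type operator of the form ``multiplication by $(\sigma\eps+\omega(k)-z) + (\text{correction from 2-photon elimination})$ plus a rank-controlled integral operator built from $\la$ and $\alpha$'' — can have at most two eigenvalues below zero in the relevant range, \emph{for large $\alpha$}. Here the mechanism is scaling: after eliminating sectors, the coupling enters the effective potential through terms like $\alpha^2 |\la\rangle\langle\la|$ divided by energy denominators that themselves grow like $\alpha\eps$ in the strong-coupling regime, so the effective attractive perturbation, once rescaled, becomes a \emph{finite-rank} (in fact rank $\le 2$ on the two spin channels) operator up to errors that vanish as $\alpha\to\infty$; a rank-$r$ perturbation of a nonnegative operator creates at most $r$ negative eigenvalues (Weyl/min-max). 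Making this rigorous requires (i) uniform-in-$z$ resolvent bounds on the two-photon block, (ii) showing the self-energy corrections are small relative to the leading $\alpha$-term, and (iii) a limiting/compactness argument that the residual non-finite-rank part of $M_1(z)$ contributes no negative spectrum for $\alpha$ large.

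The main obstacle I anticipate is step (iii) together with the massless nature of the photons: because $\inf\omega=0$, the energy denominators in the Schur complements degenerate near the infrared, so the ``correction'' operators are \emph{not} manifestly small in operator norm, only in a weaker (e.g.\ form-bounded, or strong-resolvent) sense, and one must track carefully that the would-be extra negative eigenvalues are pushed up into $\sess(H_\alpha)$ as $\alpha\to\infty$ rather than surviving. Handling this presumably uses only $\la\in L^2$ (not the stronger $\la/\sqrt\omega\in L^2$), which is why the a.e.\ continuity of $\omega$ and \eqref{zero.mass} are invoked: they pin down $\sess(H_\alpha)$ exactly via \cite{Ibrogimov-AHP2018} so that ``below the essential spectrum'' is an honest, computable condition, and they allow a change of variables / layer-cake decomposition of $\omega$ to control the infrared. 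I would structure the proof as: (1) recall $\sess(H_\alpha)$; (2) set up the two successive Schur complements with uniform bounds; (3) isolate the rank-$\le 2$ leading part of $M_1(z)$ in the strong-coupling scaling; (4) prove the remainder is negative-spectrum-free for $\alpha$ large via a compactness/min-max argument; (5) conclude by counting.
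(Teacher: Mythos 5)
Your skeleton --- successive Schur complements down to the one-photon sector, then a decomposition of the effective operator into a finite-rank attractive part plus a remainder that is harmless for large $\alpha$, then min--max counting --- is indeed the paper's strategy. (One small correction at the outset: the spin degree of freedom \emph{does} fully decouple; the unitary that regroups $(f_0^{(+)},f_1^{(-)},f_2^{(+)})$ and $(f_0^{(-)},f_1^{(+)},f_2^{(-)})$ block-diagonalizes $H_\alpha$ into two $3\times3$ operator matrices $H_\alpha^{(\pm)}$ on $\cF_s^2$, because the interaction always flips $\sigma$. This is the paper's first move, and it reduces everything to two independent scalar channels.) However, there are two genuine gaps in your counting.

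First, and most seriously: the finite-rank leading part of the effective one-photon operator is rank \emph{two} in each spin channel, not rank one --- its range is spanned by $\ov\la$ and $\ov\la/(\omega+\sigma\eps-E_{\sigma\eps}(\alpha))$. A bare rank bound therefore yields at most two negative eigenvalues per channel, i.e.\ at most \emph{four} bound states in total, which does not prove the theorem. The paper closes this by computing the $2\times2$ matrix of this rank-two operator in the natural basis and showing its determinant is strictly negative by Cauchy--Schwarz (strict because the two spanning functions are linearly independent), so the operator has signature $(1,1)$: exactly one positive eigenvalue per channel, hence at most one discrete eigenvalue of $H_\alpha^{(\sigma)}$ and at most two in total. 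Your phrase ``rank $\le 2$ on the two spin channels'' glosses over exactly the point where the factor of two is won. Second, your step (iii) --- showing the non-finite-rank remainder creates no negative spectrum --- is left as a ``compactness/min--max argument,'' and you correctly flag the infrared degeneracy as the obstacle, but offer no mechanism. The paper's resolution is quantitative, not compactness-based: subtracting the threshold equation $\Phi^{(\sigma)}_\alpha(E_{\sigma\eps}(\alpha))=0$ gives the pointwise lower bound $\Delta^{(\sigma)}_\alpha(k;E_{\sigma\eps}(\alpha))\ge\omega(k)$ for the multiplication part; the remainder kernel is bounded by $\sqrt{\omega(k_1)\omega(k_2)}\,/\,\bigl(2(\sigma\eps-E_{\sigma\eps}(\alpha))^2\bigr)$ via the elementary inequality \eqref{elementary.ineq}; and the large-coupling asymptotics $E_{\sigma\eps}(\alpha)=-\|\la\|\alpha+{\rm o}(\alpha)$ (itself a separate lemma, proved by an implicit-function argument) make the resulting constant $\alpha^2\|\la\|^2/\bigl(2(\sigma\eps-E_{\sigma\eps}(\alpha))^2\bigr)\to\tfrac12<1$, so the remainder is dominated by the $\omega(k)$ lower bound. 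Without these two ingredients --- the signature computation and the quantitative $\tfrac12<1$ estimate --- the proposal does not reach the stated bound.
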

The detailed proof of this result is given in the next section. 
It relies on a simple and instructive method based on 
the splitting trick developed in author's recent 
work~\cite{Ibrogimov-AHP2018} and asymptotic 
analyses of zeros of Nevanlinna functions.

\noindent

Throughout the paper we adopt the following notation. 
For a self-adjoint operator $T$ acting in a Hilbert space and 
a constant $\mu \in \RR$ such that $\mu\leq\min\sess(T)$, we 
denote by $N(\mu; T)$ the dimension of the spectral subspace 
of $T$ corresponding to the interval $(-\infty,\mu)$. The latter 
quantity coincides with the number of discrete eigenvalues 
(counted with multiplicities) of $T$ that are less than $\mu$.
Integrals with no indication of limits imply integration 
over the whole space $\RR^d$ or $\RR^d\times\RR^d$, 
and $\left\|\cdot\right\|$ denotes the usual $L^2$-norm. 
%
%
%
%
\section{Proof of Theorem~\ref{thm:main}}
Unless otherwise specified, we always assume that the 
discrete variable $\sigma=\pm$ is fixed. Moreover, without loss 
of generality we assume that the dispersion 
relation $\omega\colon\RR^d\to[0,\infty)$ is a 
continuous function and the coupling function $\la\colon\RR^d\to\CC$ 
is not identically zero. All the arguments below plainly work 
for almost everywhere continuous $\omega$. If the coupling 
function is identically zero on~$\RR^d$, then the photons do not couple 
to the atom and the description of the spectrum becomes 
straightforward. 

It is easy to see that the transformation 
$U\colon\CC^2\otimes\cF_s^2\to\cF_s^2\oplus\cF_s^2$, defined by
\begin{equation}
	U\colon
	\begin{pmatrix}
		\begin{pmatrix}
			f_0^{(+)}\\
			f_0^{(-)}
		\end{pmatrix},
		\begin{pmatrix}
			f_1^{(+)}\\
			f_1^{(-)}
		\end{pmatrix}, 
		\begin{pmatrix}
			f_2^{(+)}\\
			f_2^{(-)}
		\end{pmatrix}
	\end{pmatrix}
	\mapsto
	\begin{pmatrix}
		\begin{pmatrix}
			f_0^{(+)}\\
			f_1^{(-)}\\
			f_2^{(+)}
		\end{pmatrix},
		\begin{pmatrix}
			f_0^{(-)}\\
			f_1^{(+)}\\
			f_2^{(-)}
		\end{pmatrix}
	\end{pmatrix},
\end{equation}
is a unitary operator and block-diagonalizes the Hamiltonian 
$H_{\alpha}$ in \eqref{Hamiltonian}, i.e. 
\begin{equation}\label{diagonalization.of.Hamiltonian}
	U^*H_{\alpha}U={\rm{diag}}\{H_{\alpha}^{(+)}, H_{\alpha}^{(-)}\},
\end{equation}
where the operator matrix
\begin{equation}\label{op.mat.H.sigma}
	H_{\alpha}^{(\sigma)}:=
	\begin{pmatrix}
		H^{(\sigma)}_{00} & \alpha H_{01} & 0\\[0.5ex]
		\alpha H_{10} & H^{(\sigma)}_{11} & \alpha H_{12}\\[0.5ex]
		0 & \alpha H_{21} & H^{(\sigma)}_{22}
	\end{pmatrix}, \quad \Dom(H_{\alpha}^{(\sigma)})
	:=\CC\oplus\cH_1\oplus\cH_2,
\end{equation}
acts in the truncated Fock space~$\cF_s^2$ with~$\cH_1$ and~$\cH_2$ 
defined in~\eqref{Dom:H1} and~\eqref{Dom:H2}. The operator 
entries of $H_{\alpha}^{(\sigma)}$ are given by 
\begin{equation}
	\begin{aligned}
		&H^{(\sigma)}_{00} f_0 = \sigma\eps f_0, \quad  H_{01}f_1 
		= \int \la(q)f_1(q) \,\d q, \quad (H_{10}f_0)(k)
		=f_0\ov{\la(k)},\\ \label{def.op.entries.2}
		&(H^{(\sigma)}_{11} f_1)(k)=(-\sigma\eps+\omega(k))f_1(k), 
		\quad (H_{12}f_2)(k)= \int f_2(k,q)\la(q) \,\d q,\\ 
		&(H_{21}f)(k_1,k_2) = \ov{\la(k_1)}f(k_2)+\ov{\la(k_2)}f(k_1)
	\end{aligned}
\end{equation}
and 
\begin{equation*}
	(H^{(\sigma)}_{22}f_2)(k_1,k_2)=(\sigma\eps+\omega(k_1)+\omega(k_2))f_2(k_1,k_2), 
	\quad (f_0,f_1,f_2)\in\cF_s^2.
\end{equation*}
It follows from the square-integrability of the 
coupling function that 
$H_{12}\colon L^2_s(\RR^d\times\RR^d)\to L^2(\RR^d)$ 
and $H_{01}\colon L^2(\RR^d)\to\CC$ are bounded operators 
with $H_{10}=H_{01}^*$ and $H_{21}=H_{12}^*$. 
Hence, $H_{\alpha}^{(\sigma)}$ is a self-adjoint operator on 
the domain $\CC\oplus\cH_1\oplus\cH_2$ (see \cite[Theorem~V.4.3]{Kato}). 

In view of \eqref{diagonalization.of.Hamiltonian}, 
in the sequel we focus on the study of the discrete spectrum 
of the operator matrices~$H_{\alpha}^{(\pm)}$. We recall 
from~\cite{Ibrogimov-AHP2018} that
\begin{equation}\label{bottom.ess.spec.op.mat}
	\begin{aligned}
		\min\sess(\wh H_{\alpha}^{(+)})&=E_{\eps}(\alpha) 
		\quad \text{for all} \quad \alpha>0,\\[1ex]
		\min\sess(\wh H_{\alpha}^{(-)})&=
		\begin{cases} 
			-\eps & \mbox{if } \quad\ds\frac{\la}{\sqrt{\omega}}\in L^2(\RR^d) 
			\mbox{ and } \ds 0<\alpha\leq\frac{\sqrt{2\eps}}
			{\bigl\|\frac{\la}{\sqrt{\omega}}\bigr\|}, \\ 
			E_{-\eps}(\alpha) & \mbox{otherwise}, 
		\end{cases}
	\end{aligned}
\end{equation}
where $E_{\sigma\eps}(\alpha)$ is the unique zero of the continuous 
function
\begin{equation}\label{Phi}
	\Phi^{(\sigma)}_{\alpha}(z)=-\sigma\eps-z-
	\alpha^2\int\frac{|\la(q)|^2 \,\d q}{\omega(q)+\sigma\eps-z}, 
	\quad z\in(-\infty,\sigma\eps).
\end{equation}
We notice that $E_\eps(\alpha)$ exists for all $\alpha>0$, while the 
existence of $E_{-\eps}(\alpha)$ requires $\alpha>0$ to be not too 
small whenever the condition $\la/\sqrt{\omega}\in L^2(\RR^d)$ holds. 
We make the convention~$E_{-\eps}(\alpha):=-\eps$ 
whenever~$\Phi^{(-)}_\alpha$ does not have a zero in $(-\infty,-\eps)$.
In view of~\eqref{diagonalization.of.Hamiltonian}, the bottom 
of the essential spectrum of the Hamiltonian~$H_\alpha$ is thus 
given by 
\begin{equation}\label{ess.spec.Hamiltonian}
	E(\alpha):=\min\sess(H^{(\sigma)}_\alpha):=
	\min\{E_{\eps}(\alpha), E_{-\eps}(\alpha)\}.
\end{equation}
One can easily observe that $E_\eps(\alpha)<-\eps$ for 
all~$\alpha>0$. 

\smallskip

In the sequel we will be dealing with the non-linear pencil 
$R_{\alpha}^{(\sigma)}\colon(-\infty, \sigma\eps)\to\cL(\cH_1, L^2(\RR^d))$, 
defined by
\begin{equation}
	R_{\alpha}^{(\sigma)}(z):=H^{(\sigma)}_{11}-z
	-\alpha^2H_{12}(H^{(\sigma)}_{22}-z)^{-1}H_{12}^*
	-\alpha^2H_{01}(H^{(\sigma)}_{00}-z)^{-1}H_{01}^*.
\end{equation}
It is easy to see that, for each $z<\sigma\eps$, the operator 
$R_{\alpha}^{(\sigma)}(z)$ is well-defined, bounded from below 
and self-adjoint on the Hilbert space $\cH_1$. 

\smallskip

We split the proof of Theorem~\ref{thm:main} into several steps as in the 
following lemmas. 
\begin{lemma}\label{lem:crucial}
	The number of the discrete eigenvalues of the operator matrix 
	$H_{\alpha}^{(\sigma)}$ is equal to the number of the negative 
	eigenvalues of the operator $R_{\alpha}^{(\sigma)}(E_{\sigma\eps}(\alpha))$, i.e.
	\begin{equation}\label{disc.spec.charac.Schur}
		N\bigl(E_{\sigma\eps}(\alpha); H_{\alpha}^{(\sigma)}\bigr) 
		= N\bigl(0; R_{\alpha}^{(\sigma)}(E_{\sigma\eps}(\alpha))\bigr).
	\end{equation}	
\end{lemma}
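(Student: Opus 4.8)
The plan is to use the standard Schur-complement (Feshbach) reduction for block operator matrices, exploiting that $E_{\sigma\eps}(\alpha) \le \min\sess(H_\alpha^{(\sigma)})$ so that all the resolvents appearing below are well-defined and the counting function $N(\mu;\cdot)$ makes sense. First I would fix $\alpha>0$ and write $\mu:=E_{\sigma\eps}(\alpha)$; by~\eqref{bottom.ess.spec.op.mat}--\eqref{ess.spec.Hamiltonian} we have $\mu\le\min\sess(H_\alpha^{(\sigma)})$, and moreover $\mu<\sigma\eps$ since $E_\eps(\alpha)<-\eps<\eps$ and $E_{-\eps}(\alpha)\le-\eps<\eps$. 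Hence $H_{00}^{(\sigma)}-\mu=\sigma\eps-\mu>0$ and $H_{22}^{(\sigma)}-\mu$ is bounded below by $\sigma\eps+\omega(k_1)+\omega(k_2)-\mu\ge\sigma\eps-\mu>0$ (using $\omega\ge0$ and~\eqref{zero.mass}); both are therefore boundedly invertible on their respective spaces. This is precisely what is needed for the Schur complement with respect to the middle block $\cH_1$ to be defined, and $R_\alpha^{(\sigma)}(\mu)$ is exactly that Schur complement.

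The core step is a Frobenius--Schur factorization argument. For $z<\sigma\eps$ write the $3\times3$ block matrix $H_\alpha^{(\sigma)}-z$ and factor it, after conjugating by a block-triangular matrix with identity diagonal, into $\mathrm{diag}\{H_{00}^{(\sigma)}-z,\ R_\alpha^{(\sigma)}(z),\ H_{22}^{(\sigma)}-z\}$ up to congruence; since the conjugating matrices are bounded with bounded inverses and have the form $I+(\text{nilpotent})$, this congruence preserves the inertia (number of negative, zero, positive eigenvalues in the sense of spectral subspaces). Consequently, for every $z<\sigma\eps$,
\begin{equation*}
 N(z;H_\alpha^{(\sigma)}) = N(z;H_{00}^{(\sigma)}) + N(0;R_\alpha^{(\sigma)}(z)) + N(z;H_{22}^{(\sigma)}),
\end{equation*}
and at $z=\mu<\sigma\eps$ the first term vanishes (as $H_{00}^{(\sigma)}=\sigma\eps>\mu$ has no spectrum below $\mu$) and the third term vanishes too (as $H_{22}^{(\sigma)}\ge\sigma\eps>\mu$). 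This yields $N(\mu;H_\alpha^{(\sigma)})=N(0;R_\alpha^{(\sigma)}(\mu))$, which is~\eqref{disc.spec.charac.Schur}. Since $\mu=E_{\sigma\eps}(\alpha)=\min\sess(H_\alpha^{(\sigma)})$ (or below it, with the stated convention), the quantity $N(\mu;H_\alpha^{(\sigma)})$ is exactly the number of discrete eigenvalues of $H_\alpha^{(\sigma)}$, as claimed. I would cite~\cite{Ibrogimov-AHP2018} for the analogous computation and for the Birman--Schwinger/Schur-complement lemma in this setting.

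The main obstacle is a technical one: $H_{11}^{(\sigma)}$ is unbounded, so one must be careful that the block factorization is valid on the operator domains rather than merely formally, and that the inertia really is preserved. The cleanest route is to avoid manipulating unbounded operators directly by instead working with the quadratic forms, or by noting that $R_\alpha^{(\sigma)}(z)=H_{11}^{(\sigma)}-z-B(z)$ where $B(z):=\alpha^2H_{12}(H_{22}^{(\sigma)}-z)^{-1}H_{12}^*+\alpha^2H_{01}(H_{00}^{(\sigma)}-z)^{-1}H_{01}^*$ is bounded and self-adjoint (by the square-integrability of $\la$, which gives boundedness of $H_{12},H_{01}$ as stated), so $R_\alpha^{(\sigma)}(z)$ is self-adjoint on $\cH_1$ and bounded below, matching the remark just before the lemma. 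A secondary point requiring care: one must check that no discrete eigenvalue of $H_\alpha^{(\sigma)}$ is hidden exactly at $\mu$; this is ruled out because the counting subspace is for the open interval $(-\infty,\mu)$ and, if $\mu\in\sess$, it cannot be a discrete eigenvalue, while if $\mu<\min\sess$ the identity still holds by continuity of $z\mapsto N(0;R_\alpha^{(\sigma)}(z))$ and of the spectral projections of $H_\alpha^{(\sigma)}$ on $(-\infty,z)$ as $z\uparrow\mu$. I expect the write-up to lean on the corresponding lemma from~\cite{Ibrogimov-AHP2018}, so the proof here can be kept short: state the factorization, invoke invariance of inertia under congruence by bounded invertible block-triangular operators, and evaluate the two trivial terms at $z=\mu$.
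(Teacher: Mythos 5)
Your argument is correct in substance but takes a genuinely different route from the paper's. You derive \eqref{disc.spec.charac.Schur} in one stroke at the single point $z=E_{\sigma\eps}(\alpha)$ from a Frobenius--Schur factorization of $H_{\alpha}^{(\sigma)}-z$ together with invariance of the negativity index under congruence by the boundedly invertible block-triangular factors (Haynsworth-type inertia additivity), the outer diagonal terms $N(z;H_{00}^{(\sigma)})$ and $N(z;H_{22}^{(\sigma)})$ vanishing since $z<\sigma\eps$. The paper instead first proves the pointwise equivalence $z\in\sd(H_{\alpha}^{(\sigma)})\cap(-\infty,E_{\sigma\eps}(\alpha))\Leftrightarrow 0\in\sd(R_{\alpha}^{(\sigma)}(z))$ by eliminating $f_0$ and $f_2$ from the eigenvalue system, and then counts by sweeping $z$: it shows $\frac{\partial}{\partial z}\langle R_{\alpha}^{(\sigma)}(z)\phi,\phi\rangle<0$ and $R_{\alpha}^{(\sigma)}(z)\uparrow+\infty$ as $z\downarrow-\infty$, so each negative eigenvalue of $R_{\alpha}^{(\sigma)}(E_{\sigma\eps}(\alpha))$ corresponds to exactly one zero-crossing of a monotone eigenvalue branch, hence to exactly one discrete eigenvalue of $H_{\alpha}^{(\sigma)}$. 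Your approach dispenses with the monotonicity and the limit at $-\infty$, but must justify the congruence at the quadratic-form level because $H_{11}^{(\sigma)}$ and $H_{22}^{(\sigma)}$ are unbounded; you correctly flag this, and the fix you sketch works, since the triangular factors differ from the identity only by bounded operators mapping $\cH_1$ into $\Dom(H_{00}^{(\sigma)})\oplus\{0\}\oplus\Dom(H_{22}^{(\sigma)})$, so they preserve the form domain. In exchange, the paper's route needs the Fredholm property of $R_{\alpha}^{(\sigma)}(z)$ (quoted from \cite{Ibrogimov-AHP2018}) to know that the detected zeros are discrete eigenvalues. One small caveat: your justification of $\mu<\sigma\eps$ only yields $E_{-\eps}(\alpha)\leq-\eps$ when $\sigma=-$; in the regime where the convention $E_{-\eps}(\alpha):=-\eps$ is in force the operator $R_{\alpha}^{(-)}(\mu)$ is not defined, so the identity must be read, as the paper does, under the standing assumption that $\alpha$ is large enough for $\Phi_{\alpha}^{(-)}$ to have a genuine zero in $(-\infty,-\eps)$.
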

\begin{proof}
	First, we justify the relation 	
	\begin{equation}\label{disc.spec.mat.Schur}
		z\in\sd(H^{(\sigma)}_\alpha)\cap(-\infty, E_{\sigma\eps}(\alpha)) 
		\quad 
		\Longleftrightarrow \quad 0\in\sd(R_{\alpha}^{(\sigma)}(z)).
	\end{equation}
	To this end, let us fix $z<E_{\sigma\eps}(\alpha)$ and note that $z$ is a 
	discrete eigenvalue for the operator matrix $H^{(\sigma)}_\alpha$ with a 
	non-zero eigenvector $\Psi=(f_0, f_1, f_2)^t\in\cF^2_s$ if and only if 
	%
	\begin{equation}\label{system2}
		\begin{cases}
			\begin{aligned}
				(H^{(\sigma)}_{00}-z)f_0+\alpha H_{01}f_1=0,\\[1.5ex]
				\alpha H_{10}f_0+(H^{(\sigma)}_{11}-z)f_1+\alpha H_{12}f_2=0, \\[1.5ex]
				\alpha H_{21}f_1+(H^{(\sigma)}_{22}-z)f_2=0.
			\end{aligned}
		\end{cases}
	\end{equation}
	Since the spectra of both $H^{(\sigma)}_{00}$ and $H^{(\sigma)}_{22}$ 
	lie on the right of $\sigma\eps$, the first and the third equations in~\eqref{system2} 
	can be equivalently written as 
	$f_0=-\alpha(H^{(\sigma)}_{00}-z)^{-1}H_{01}f_1$ and 
	$f_2=-\alpha(H^{(\sigma)}_{22}-z)^{-1}H_{21}f_1$, respectively. 
	Inserting these into the second equation in \eqref{system2}, we obtain 
	\begin{equation}\label{disc.spec.Schur.comp}
		R_{\alpha}^{(\sigma)}(z)f_1=0. 
	\end{equation}
	Note that $f_1$ must be a non-zero vector, for otherwise $\Psi$ would be 
	the zero element of $\cF_s^2$, contradicting our hypothesis. 
	Since $z<E_{\sigma\eps}(\alpha)$, it follows that 
	$R_{\alpha}^{(\sigma)}(z)$ is a Fredholm operator as it is a rank-one 
	perturbation of the Fredholm operator corresponding to the Schur 
	complement of $H^{(\sigma)}_{22}-z$ in the lower $2\times2$ operator 
	matrix in $H^{(\sigma)}_\alpha$ (see \cite{Ibrogimov-AHP2018}). 
	That is why \eqref{disc.spec.Schur.comp} is equivalent to the fact that 
	$0$ is a discrete eigenvalue for $R_{\alpha}^{(\sigma)}(z)$ with an 
	eigenvector $f_1$, thus justifying the claim in \eqref{disc.spec.mat.Schur}. 

	\smallskip
	
	Now the claim in \eqref{disc.spec.charac.Schur} follows from \eqref{disc.spec.mat.Schur} 
	if we show that $R_{\alpha}^{(\sigma)}$ 
	is a strictly decreasing operator function in $(-\infty, E_{\sigma\eps}(\alpha))$ 
	and has the following property
	\begin{equation}\label{lim.inf.Rz}
		R_{\alpha}^{(\sigma)}(z)\uparrow+\infty \quad \text{as} \quad z\downarrow-\infty. 
	\end{equation}
	To this end, let us fix $\phi\in\cH_1$ and note that
	\begin{equation}\label{quad.form.R}
		\begin{aligned}
			\bigl\langle R_{\alpha}^{(\sigma)}(z)\phi, \phi\bigr\rangle
			=&\langle H^{(\sigma)}_{11}\phi,\phi\rangle-z\|\phi\|^2
			-\alpha^2\bigl\langle (H^{(\sigma)}_{22}-z)^{-1}H_{12}^*\phi, 
			H_{12}^*\phi\bigr\rangle\\
			&-\alpha^2\bigl\langle (H^{(\sigma)}_{00}-z)^{-1}H_{01}^*\phi, 
			H_{01}^*\phi\bigr\rangle.
		\end{aligned}
	\end{equation}
	Hence, it follows that, for $\phi\neq0$,
	\begin{equation}\label{deriv.wrt.z}
		\frac{\partial}{\partial z}\bigl\langle R_{\alpha}^{(\sigma)}(z)\phi, 
		\phi\bigr\rangle
		=-\|\phi\|^2-\alpha^2\|(H^{(\sigma)}_{22}-z)^{-1}H_{12}^*\phi\|^2
		-\alpha^2\|(H^{(\sigma)}_{00}-z)^{-1}H_{01}^*\phi\|^2<0.
	\end{equation}
	Moreover, we have the following standard estimates 
	\begin{equation}
		\begin{aligned}
			\bigl|\bigl\langle R_{\alpha}^{(\sigma)}(z)\phi, \phi\bigr\rangle
			-\langle H^{(\sigma)}_{11}\phi,\phi\rangle+z\|\phi\|^2\bigr|
			& \leq \bigl|\bigl\langle (H^{(\sigma)}_{22}-z)^{-1}H_{12}^*\phi, 
			H_{12}^*\phi\bigr\rangle\bigr|
			+\bigl|\bigl\langle (H^{(\sigma)}_{00}-z)^{-1}H_{01}^*\phi, 
			H_{01}^*\phi\bigr\rangle\bigr|\\
			&\leq\|(H^{(\sigma)}_{22}-z)^{-1}\|\|H_{12}^*\phi\|^2
			+
			\|(H^{(\sigma)}_{00}-z)^{-1}\|\|H_{01}^*\phi\|^2\\
			&\leq \frac{1}{\sigma\eps-z}\bigl(\|H_{12}^*\phi\|^2
			+\|H_{01}^*\phi\|^2\bigr).				
		\end{aligned}	
	\end{equation}
	Now letting $z\downarrow-\infty$, we obtain the claim in \eqref{lim.inf.Rz}.	
\end{proof}
We notice that the function $\Phi^{(-)}_{\alpha}$ (see \eqref{Phi}) 
has a unique zero for all sufficiently large $\alpha>0$. 
On the account of Lemma~\ref{lem:crucial}, from now on we always 
assume~$\alpha>0$ to be so large that the bottom of the essential 
spectrum of $H^{(\sigma)}_\alpha$ coincides with the unique zero in 
$(-\infty,\sigma\eps)$ of $\Phi^{(\sigma)}_{\alpha}$.

\begin{lemma}\label{asymp.of.E.alpha}
	The bottom of the essential spectrum of $H^{(\sigma)}_\alpha$ has the following 
	large coupling behavior 
	\begin{equation}\label{asymp.exp.Ea}
		E_{\sigma\eps}(\alpha)=-\|\la\|\alpha+\rm{o}(\alpha), 
		\quad \alpha\uparrow+\infty.
	\end{equation}
\end{lemma}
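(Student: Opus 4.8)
The plan is to locate the zero $E_{\sigma\eps}(\alpha)$ of $\Phi^{(\sigma)}_{\alpha}$ (see~\eqref{Phi}) by a linear rescaling of the spectral parameter: substitute $z=-c\alpha$ with a free parameter $c>0$ and track the behaviour of $\alpha^{-1}\Phi^{(\sigma)}_{\alpha}(-c\alpha)$ as $\alpha\uparrow+\infty$. On the account of the standing assumption made just before the lemma, for the $\alpha$ under consideration $E_{\sigma\eps}(\alpha)$ is precisely the unique zero of $\Phi^{(\sigma)}_{\alpha}$ in $(-\infty,\sigma\eps)$, so it suffices to control that zero.

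First I would record a monotonicity/localization fact. Differentiating under the integral sign (legitimate on each compact subinterval of $(-\infty,\sigma\eps)$, where $\omega(q)+\sigma\eps-z$ is bounded below by a positive constant and $\la\in L^2(\RR^d)$ provides the dominating function) shows that $\partial_z\Phi^{(\sigma)}_{\alpha}(z)=-1-\alpha^2\int|\la(q)|^2(\omega(q)+\sigma\eps-z)^{-2}\,\d q<0$, so $\Phi^{(\sigma)}_{\alpha}$ is strictly decreasing on $(-\infty,\sigma\eps)$. Hence $\Phi^{(\sigma)}_{\alpha}$ is positive on $(-\infty,E_{\sigma\eps}(\alpha))$ and negative on $(E_{\sigma\eps}(\alpha),\sigma\eps)$; consequently, exhibiting two points of $(-\infty,\sigma\eps)$ at which $\Phi^{(\sigma)}_{\alpha}$ has opposite signs pins the zero down between them.

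Next comes the main computation. For fixed $c>0$,
\[\tfrac{1}{\alpha}\,\Phi^{(\sigma)}_{\alpha}(-c\alpha)=-\tfrac{\sigma\eps}{\alpha}+c-\alpha\int\frac{|\la(q)|^2\,\d q}{\omega(q)+\sigma\eps+c\alpha}.\]
Once $\alpha>2\eps/c$ one has $\sigma\eps+c\alpha\geq c\alpha/2>0$, so $0\le\alpha\,|\la(q)|^2(\omega(q)+\sigma\eps+c\alpha)^{-1}=|\la(q)|^2\big((\omega(q)+\sigma\eps)/\alpha+c\big)^{-1}\le 2|\la(q)|^2/c\in L^1(\RR^d)$, with pointwise limit $|\la(q)|^2/c$. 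Dominated convergence then gives $\alpha\int|\la(q)|^2(\omega(q)+\sigma\eps+c\alpha)^{-1}\,\d q\to\|\la\|^2/c$, hence $\alpha^{-1}\Phi^{(\sigma)}_{\alpha}(-c\alpha)\to c-\|\la\|^2/c$ as $\alpha\uparrow+\infty$.

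Finally I would conclude as follows. Since $\la\not\equiv 0$ we have $\|\la\|>0$; fix any $\delta\in(0,\|\la\|)$ and set $c_\pm=\|\la\|\pm\delta$. The function $c\mapsto c-\|\la\|^2/c$ is strictly increasing on $(0,\infty)$ with unique zero $\|\la\|$, so its value at $c_-$ is negative and at $c_+$ is positive. By the limit above, for all sufficiently large $\alpha$ (also large enough that $-c_\pm\alpha<\sigma\eps$) one gets $\Phi^{(\sigma)}_{\alpha}(-c_-\alpha)<0<\Phi^{(\sigma)}_{\alpha}(-c_+\alpha)$, whence by the localization step $-c_+\alpha<E_{\sigma\eps}(\alpha)<-c_-\alpha$, i.e.\ $|E_{\sigma\eps}(\alpha)+\|\la\|\alpha|<\delta\alpha$. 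As $\delta\in(0,\|\la\|)$ is arbitrary, $\alpha^{-1}\big(E_{\sigma\eps}(\alpha)+\|\la\|\alpha\big)\to0$, which is precisely~\eqref{asymp.exp.Ea}. The only delicate point is the dominated-convergence step, and within it the need to keep the denominator $\omega+\sigma\eps+c\alpha$ positive for all large $\alpha$ in the case $\sigma=-$; everything else is routine.
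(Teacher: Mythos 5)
Your proof is correct, and it takes a genuinely different route from the paper's. Both arguments rest on the same rescaling heuristic --- that the equation $\Phi^{(\sigma)}_{\alpha}(z)=0$ at $z\sim -c\alpha$ degenerates, after dividing by $\alpha$, to $c-\|\la\|^2/c=0$ --- but the mechanisms differ. The paper introduces the two-variable function $\psi(x,y)=y-\sigma\eps x-\|\la\|^{-2}\int|\la(q)|^2\bigl((\omega(q)+\sigma\eps)x+y\bigr)^{-1}\,\dd q$ with $x=1/(\alpha\|\la\|)$, checks $\psi(0,1)=0$ and $\partial_y\psi(0,1)=2$, and invokes a one-sided (non-differentiable) version of the implicit function theorem to produce a continuous branch $\wh E_{\sigma\eps}$ with $\wh E_{\sigma\eps}(0)=1$; the asymptotics then follow from right-continuity at $0$. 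You instead prove strict monotonicity of $\Phi^{(\sigma)}_{\alpha}$ on $(-\infty,\sigma\eps)$, compute $\lim_{\alpha\to\infty}\alpha^{-1}\Phi^{(\sigma)}_{\alpha}(-c\alpha)=c-\|\la\|^2/c$ by dominated convergence (correctly handling the sign of the denominator when $\sigma=-$ via $\alpha>2\eps/c$), and bracket the unique zero between $-(\|\la\|+\delta)\alpha$ and $-(\|\la\|-\delta)\alpha$. Your version is more elementary and self-contained: it avoids the citation to the non-standard implicit function theorem and uses only monotonicity, the intermediate-value localization, and dominated convergence. What the paper's route buys in exchange is a genuine continuous parametrization $\alpha\mapsto\wh E_{\sigma\eps}(1/(\alpha\|\la\|))$ of the rescaled zero, which would be the natural starting point for extracting higher-order terms in the expansion; for the leading-order statement \eqref{asymp.exp.Ea} actually needed here, your argument is entirely adequate.
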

\begin{proof}
	Consider the continuous function
	\begin{equation}
		\psi(x,y)=y-\sigma\eps x-\frac{1}{\|\la\|^2}\int 
		\frac{|\la(q)|^2 \,\d q}{(\omega(q)+\sigma\eps)x+y}
	\end{equation}
	for $(x,y)\in[0,\frac{1}{\eps})\times[\frac{1}{2},\infty)$. 
	We have $\psi(0,1)=0$ and the partial derivative 
	$\frac{\partial\psi}{\partial y}$ exists and is right continuous 
	at $(0,1)$. Since $\frac{\partial\psi}{\partial y}(0,1)=2
	$, 
	the implicit function theorem\footnote{In fact, here we are using 
		the non-standard version (\cf~\cite{Kumagai-1980-implicit}) 
		of the implicit function theorem which requires only strict 
		monotonicity in $y$ and no differentiability in a (right-) 
		neighborhood of $(0,1)$.} 
	applies and yields the existence of a sufficiently small 
	constant $\delta>0$ and a unique continuous function 
	$\wh E_{\sigma\eps}\colon[0,\delta)\to\RR$ such that $\wh E(0)=1$ 
	and $\psi(\beta, \wh E_{\sigma\eps}(\beta))=0$ for all 
	$\beta\in[0,\delta)$. Therefore, we get 
	\begin{equation}\label{asymp.1}
		\Phi^{(\sigma)}_{\alpha}\bigg(-\alpha\|\la\|
		\wh E_{\sigma\eps}\Bigl(\frac{1}{\alpha\|\la\|}\Bigr)\bigg)
		=\alpha\|\la\|\psi\bigg(\frac{1}{\alpha\|\la\|}, 
		\wh E_{\sigma\eps}\Bigl(\frac{1}{\alpha\|\la\|}
		\Bigr)\bigg)=0 
	\end{equation}
	for all $\alpha\in\bigl(\frac{1}{\delta\|\la\|}, \infty)$. 
	Clearly, we have 
	\begin{equation}
		-\alpha\|\la\|\wh E_{\sigma\eps}\Bigl(\frac{1}{\alpha\|\la\|}
		\Bigr)<\sigma\eps
	\end{equation}
	for all sufficiently large $\alpha>0$. Since  
	$E_{\sigma\eps}(\alpha)$ is the unique zero of the function 
	$\Phi^{(\sigma)}_{\alpha}$ in the interval $(-\infty,\sigma\eps)$, 
	we thus deduce from \eqref{asymp.1} that, for all sufficiently large $\alpha>0$,
	\begin{equation}
		E_{\sigma\eps}(\alpha)=-\alpha\|\la\|\wh E_{\sigma\eps}
		\Bigl(\frac{1}{\alpha\|\la\|}\Bigr).
	\end{equation}
	This and the 
	right-continuity at zero of the function $\wh E_{\sigma\eps}(\mydot)$ yield 
	the desired asymptotic expansion.
\end{proof}
It is easy to verify that  
\begin{equation}\label{Schur2}
	R_{\alpha}^{(\sigma)}(z) = \Delta_{\alpha}^{(\sigma)}(z)-\alpha^2 \wh K^{(\sigma)}(z),
\end{equation}
where, $\Delta_{\alpha}^{(\sigma)}\colon(-\infty, \sigma\eps)\to\cL(\cH_1, L^2(\RR^d))$ is 
the pencil of multiplication operators by the functions 
\begin{equation}\label{def:Delta}
	\begin{aligned}
		\Delta_{\alpha}^{(\sigma)}(k;z):
		&= \Phi_{\alpha}^{(\sigma)}(z-\omega(k))\\
		&=\omega(k)-\sigma\eps-z-\alpha^2\int\frac{|\la(q)|^2\, \d q}{\omega(k)+\omega(q)+\sigma\eps-z}
	\end{aligned}
\end{equation}
and $\wh K^{(\sigma)}(z)\colon(-\infty, \sigma\eps)\to\cL(\cH_1, L^2(\RR^d))$ is the 
pencil of integral operators with the kernels 
\begin{equation}\label{kernel:p}
	\wh p^{(\sigma)}(k_1,k_2;z):=
	\frac{\ov{\la(k_1)}\la(k_2)}{\omega(k_1)+\omega(k_2)+\sigma\eps-z} 
	+\frac{\ov{\la(k_1)}\la(k_2)}{\sigma\eps-z}.
\end{equation}
For fixed $z\leq E_{\sigma\eps}(\alpha)$, let us consider the decomposition
\begin{equation}
	\frac{1}{\omega(k_1)+\omega(k_2)+\sigma\eps-z}=\Psi^{(\sigma)}_1(k_1,k_2;z)
	+\Psi^{(\sigma)}_2(k_1,k_2;z),
\end{equation}
where
\begin{equation}\label{kernel:k_1} 						 	
	\Psi^{(\sigma)}_1(k_1,k_2;z):=\frac{1}{\omega(k_1)+\sigma\eps-z}+
	\frac{1}{\omega(k_2)+\sigma\eps-z}-\frac{1}{\sigma\eps-z}
\end{equation}
and
\begin{equation}\label{kernel:k_2}
	\Psi^{(\sigma)}_2(k_1,k_2;z):=\frac{1}{\omega(k_1)+\omega(k_2)+\sigma\eps-z}
	-\Psi^{(\sigma)}_1(k_1,k_2;z),\quad k_1,k_2\in\RR^d.
\end{equation}
We denote by $\wh K^{(\sigma)}_1(z)$ and $K^{(\sigma)}_2(z)$ the integral operators 
in $L^2(\RR^d)$ whose kernels are the functions 
\begin{equation}
	(k_1,k_2)\mapsto\ov{\la(k_1)}\la(k_2)\Bigl[\Psi^{(\sigma)}_1(k_1,k_2;z)+\frac{1}{\sigma\eps-z}\Bigr]
\end{equation}
and $(k_1,k_2)\mapsto\ov{\la(k_1)}\la(k_2)\Psi^{(\sigma)}_2(k_1,k_2;z)$, respectively. 
Then, we have the corresponding decomposition
\begin{equation}\label{decomposition.of.K}
	\wh K^{(\sigma)}(z)=\wh K^{(\sigma)}_1(z)+K^{(\sigma)}_2(z).
\end{equation} 
The next result confirms that the numerical range of the self-adjoint operator $\Delta_{\alpha}^{(\sigma)}(E_{\sigma\eps}(\alpha))-\alpha^2K^{(\sigma)}_2(E_{\sigma\eps}(\alpha))$ 
is contained in $(0,\infty)$ for all sufficiently large $\alpha>0$. 
\begin{lemma}\label{lem:positive.num.range}
	Let $\psi\in\cH_1$ be arbitrary. For all sufficiently large $\alpha>0$, we have
	\begin{equation}\label{num.range.estimate}
		\int\Delta^{(\sigma)}_\alpha(k;E_{\sigma\eps}(\alpha))|\psi(k)|^2\,{\rm{d}} k \geq \alpha^2 \int \bigl(K^{(\sigma)}_2(E_{\sigma\eps}(\alpha))\psi\bigr)(k)\ov{\psi(k)}\,{\rm{d}} k.
	\end{equation}	
\end{lemma}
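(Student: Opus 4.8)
The plan is to prove \eqref{num.range.estimate} by sandwiching both sides against $\int\omega(k)|\psi(k)|^2\,\d k$: on the one hand I will show that $\Delta^{(\sigma)}_\alpha(\,\cdot\,;E_{\sigma\eps}(\alpha))$ dominates the multiplication operator by $\omega$, and on the other that the quadratic form of $\alpha^2K^{(\sigma)}_2(E_{\sigma\eps}(\alpha))$ is, for large $\alpha$, at most $\int\omega|\psi|^2$, because the resolvent denominators in its kernel carry a factor $1/c_\alpha^2$ with $c_\alpha:=\sigma\eps-E_{\sigma\eps}(\alpha)$, and $c_\alpha\sim\|\la\|\alpha$ beats the coupling $\alpha^2$. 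Note first that $\int\omega|\psi|^2\le\|\psi\|^2+\|\omega\psi\|^2<\infty$ for every $\psi\in\cH_1$, so all integrals below converge.

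For the lower bound on $\Delta^{(\sigma)}_\alpha$ I would use that $E_{\sigma\eps}(\alpha)$ is the zero of $\Phi^{(\sigma)}_\alpha$ together with the identity $\Delta^{(\sigma)}_\alpha(k;z)=\Phi^{(\sigma)}_\alpha(z-\omega(k))$ from \eqref{def:Delta}; subtracting $\Phi^{(\sigma)}_\alpha(E_{\sigma\eps}(\alpha))=0$ from $\Phi^{(\sigma)}_\alpha(E_{\sigma\eps}(\alpha)-\omega(k))$ gives
\[
\Delta^{(\sigma)}_\alpha(k;E_{\sigma\eps}(\alpha))
=\omega(k)\Bigl(1+\alpha^2\!\int\frac{|\la(q)|^2\,\d q}{(\omega(q)+c_\alpha)(\omega(k)+\omega(q)+c_\alpha)}\Bigr)\ \ge\ \omega(k).
\]
For the kernel of $K^{(\sigma)}_2$, combining \eqref{kernel:k_1}--\eqref{kernel:k_2} and reducing to a common denominator I expect the closed form
\[
\Psi^{(\sigma)}_2(k_1,k_2;E_{\sigma\eps}(\alpha))
=\frac{\omega(k_1)\omega(k_2)\bigl(\omega(k_1)+\omega(k_2)+2c_\alpha\bigr)}{c_\alpha(\omega(k_1)+c_\alpha)(\omega(k_2)+c_\alpha)(\omega(k_1)+\omega(k_2)+c_\alpha)}\ \ge\ 0,
\]
whence, bounding $\omega(k_1)+\omega(k_2)+2c_\alpha\le2(\omega(k_1)+\omega(k_2)+c_\alpha)$, one obtains the symmetric estimate $0\le\Psi^{(\sigma)}_2(k_1,k_2;E_{\sigma\eps}(\alpha))\le\tfrac{2}{c_\alpha}\,\tfrac{\omega(k_1)}{\omega(k_1)+c_\alpha}\,\tfrac{\omega(k_2)}{\omega(k_2)+c_\alpha}$.

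Inserting this bound into the quadratic form of $\alpha^2K^{(\sigma)}_2(E_{\sigma\eps}(\alpha))$, taking absolute values inside the double integral (legitimate since $\Psi^{(\sigma)}_2\ge0$) so that it factorises, and then applying Cauchy--Schwarz to $|\la|\tfrac{\omega}{\omega+c_\alpha}|\psi|=\bigl(|\la|\tfrac{\sqrt\omega}{\omega+c_\alpha}\bigr)\bigl(\sqrt\omega\,|\psi|\bigr)$, I would reach
\[
\alpha^2\Bigl|\int\bigl(K^{(\sigma)}_2(E_{\sigma\eps}(\alpha))\psi\bigr)(k)\ov{\psi(k)}\,\d k\Bigr|
\le\frac{2\alpha^2}{c_\alpha}\Bigl(\int\frac{|\la(k)|^2\omega(k)}{(\omega(k)+c_\alpha)^2}\,\d k\Bigr)\int\omega(k)|\psi(k)|^2\,\d k.
\]
Since $t/(t+c_\alpha)^2\le1/(4c_\alpha)$ for all $t\ge0$, the middle bracket is at most $\|\la\|^2/(4c_\alpha)$, so the prefactor is at most $\alpha^2\|\la\|^2/(2c_\alpha^2)$. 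Finally I invoke Lemma~\ref{asymp.of.E.alpha}: $c_\alpha=\sigma\eps-E_{\sigma\eps}(\alpha)=\|\la\|\alpha+\mathrm{o}(\alpha)$, so $\alpha^2\|\la\|^2/(2c_\alpha^2)\to\tfrac12$ and in particular is $\le1$ for all sufficiently large $\alpha$; combining with $\Delta^{(\sigma)}_\alpha(k;E_{\sigma\eps}(\alpha))\ge\omega(k)$ yields \eqref{num.range.estimate}.

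The argument is essentially a chain of elementary estimates, so I do not expect a genuine obstacle; the two places that demand care are verifying the algebraic identity for $\Psi^{(\sigma)}_2$ and confirming that the constant produced by the asymptotics of $E_{\sigma\eps}(\alpha)$ is strictly below $1$. The conceptual point is that the factor $\omega(k_1)\omega(k_2)$ in the numerator of $\Psi^{(\sigma)}_2$ makes $K^{(\sigma)}_2$ small compared with the multiplication operator by $\omega$, so that once $c_\alpha$ is of order $\|\la\|\alpha$ it can be absorbed into the free part $\omega$ of $\Delta^{(\sigma)}_\alpha$.
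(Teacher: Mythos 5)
Your proposal is correct and follows essentially the same route as the paper: the lower bound $\Delta^{(\sigma)}_\alpha(k;E_{\sigma\eps}(\alpha))\ge\omega(k)$ obtained by subtracting $\Phi^{(\sigma)}_\alpha(E_{\sigma\eps}(\alpha))=0$, a separable pointwise bound on $\Psi^{(\sigma)}_2$, Cauchy--Schwarz to reduce to $\int\omega|\psi|^2$ with the same constant $\alpha^2\|\la\|^2/(2c_\alpha^2)$, and Lemma~\ref{asymp.of.E.alpha} to make that constant tend to $\tfrac12$. The only (harmless, in fact slightly sharper) deviation is that you derive the exact closed form of $\Psi^{(\sigma)}_2$ and bound it by $\tfrac{2}{c_\alpha}\tfrac{\omega(k_1)}{\omega(k_1)+c_\alpha}\tfrac{\omega(k_2)}{\omega(k_2)+c_\alpha}$ together with $t/(t+c)^2\le 1/(4c)$, whereas the paper invokes the ready-made inequality \eqref{elementary.ineq} giving $\Psi^{(\sigma)}_2\le\sqrt{\omega(k_1)\omega(k_2)}/(2c_\alpha^2)$ directly.
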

\begin{proof}
	Since $\Phi^{(\sigma)}_{\alpha}(E_{\sigma\eps}(\alpha))=0$, we have 
	the following simple yet quite important pointwise estimate	
	\begin{equation*}
		\begin{aligned}
			\Delta_{\alpha}^{(\sigma)}(k;E_{\sigma\eps}(\alpha))&= 
			\Delta_{\alpha}^{(\sigma)}(k;E_{\sigma\eps}(\alpha))
			-\Phi^{(\sigma)}_{\alpha}(E_{\sigma\eps}(\alpha))\\
			&=\omega(k)\bigg(1+\alpha^2\int\frac{|\la(q)|^2\,\d q}
			{(\omega(q)+\sigma\eps-E_{\sigma\eps}(\alpha))(\omega(q)+\omega(k)
				+\sigma\eps-E_{\sigma\eps}(\alpha))}\bigg)\\
			&\geq \omega(k), \quad k\in\RR^d.
		\end{aligned}
	\end{equation*}
	This leads to the following estimate for the left-hand-side of 
	\eqref{num.range.estimate}  
	\begin{equation}\label{num.range.low.bd}
		\int\Delta^{(\sigma)}_\alpha(k;E_{\sigma\eps}(\alpha))|\psi(k)|^2\,\d k \geq 
		\int\omega(k)|\psi(k)|^2\,\d k.
	\end{equation}
	Next, we recall from \cite{Ibrogimov-AHP2018} the following elementary 
	(yet quite important!) inequality
	\begin{equation}\label{elementary.ineq}
		0 \leq \frac{1}{a+b+c}-\frac{1}{a+c}-\frac{1}{b+c}
		+\frac{1}{c} \leq  \frac{\sqrt{ab}}{2c^2},
	\end{equation}
	which holds for all $a\geq0$, $b\geq0$ and $c>0$. Applying this 
	inequality with $a=\omega(k_1)\geq0$, $b=\omega(k_2)\geq0$ 
	and $c=\sigma\eps-E_{\sigma\eps}(\alpha)>0$, we can estimate the 
	function in \eqref{kernel:k_2} as follows
	\begin{equation}\label{estimate.pointwise.on.Psi2}
		0\leq \Psi^{(\sigma)}_2(k_1,k_2;E_{\sigma\eps}(\alpha)) \leq 
		\frac{1}{2(\sigma\eps-E_{\sigma\eps}(\alpha))^2}\sqrt{\omega(k_1)}\sqrt{\omega(k_2)}, 
		\quad k_1,k_2\in\RR^d.
	\end{equation}
	Using Fubini's theorem and the Cauchy-Schwarz inequality we thus obtain the following 
	upper bound for the right-hand-side of \eqref{num.range.estimate}
	\begin{equation}\label{CS}
		\begin{aligned}
			\bigg|\alpha^2 \int \bigl(K^{(\sigma)}_2(E_{\sigma\eps}(\alpha))\psi\bigr)(k)
			\ov{\psi(k)}\,\d k\bigg|
			&\leq \frac{\alpha^2}{2(\sigma\eps-E_{\sigma\eps}(\alpha))^2}
			\bigg(\int|\la(k)||\psi(k)|\sqrt{\omega(k)}\,\d k\bigg)^2\\
			&\leq \frac{\alpha^2 \|\la\|^2}{2(\sigma\eps-E_{\sigma\eps}(\alpha))^2}
				\bigg(\int\omega(k)|\psi(k)|^2\,\d k\bigg).
		\end{aligned}
	\end{equation}
	On the other hand, the asymptotic expansion \eqref{asymp.exp.Ea} guarantees that
	\begin{equation}\label{asymp.const.integ.estim}
		\frac{\alpha^2}{2(\sigma\eps-E_{\sigma\eps}(\alpha))^2}
		=\frac{\alpha^2}{2(\sigma\eps+\|\la\|\alpha+\rm{o}(\alpha))^2}
		=\frac{1}{2\|\la\|^2}+\rm{o}(1), \quad \alpha\uparrow +\infty.
	\end{equation}
	In view of this, \eqref{num.range.estimate} follows 
	immediately from \eqref{CS} combined with \eqref{num.range.low.bd}.
\end{proof}
\begin{lemma}\label{lem:K1.has.1.EV}
	For all sufficiently large $\alpha>0$, the integral operator 
	$\wh K^{(\sigma)}_1(E_{\sigma\eps}(\alpha))$ has exactly one 
	positive eigenvalue.	
\end{lemma}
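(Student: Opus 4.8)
The plan is to observe that the kernel of $\wh K_1^{(\sigma)}(z)$ is degenerate of rank at most two, and then to perform an elementary analysis of the resulting $2\times2$ spectral problem. By the definition~\eqref{kernel:k_1} of $\Psi_1^{(\sigma)}$, the bracketed factor in the kernel of $\wh K_1^{(\sigma)}(z)$ simplifies to
\begin{equation*}
	\Psi_1^{(\sigma)}(k_1,k_2;z)+\frac{1}{\sigma\eps-z}
	=\frac{1}{\omega(k_1)+\sigma\eps-z}+\frac{1}{\omega(k_2)+\sigma\eps-z},
\end{equation*}
so that, setting $g_z(k):=\la(k)/(\omega(k)+\sigma\eps-z)$ — which lies in $L^2(\RR^d)$ because $0<1/(\omega(k)+\sigma\eps-z)\le 1/(\sigma\eps-z)$ for every $z<\sigma\eps$ — the kernel of $\wh K_1^{(\sigma)}(z)$ equals $\ov{g_z(k_1)}\,\la(k_2)+\ov{\la(k_1)}\,g_z(k_2)$. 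Hence
\begin{equation*}
	\wh K_1^{(\sigma)}(z)=|\ov{g_z}\rangle\langle\ov\la|+|\ov\la\rangle\langle\ov{g_z}|
\end{equation*}
is a self-adjoint operator on $L^2(\RR^d)$ of rank at most two, so it has at most two nonzero eigenvalues and the number of its positive eigenvalues is unambiguous.

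I would then study the restriction of $\wh K_1^{(\sigma)}(z)$ to the invariant subspace $V:=\span\{\ov\la,\ov{g_z}\}$, which contains the range of $\wh K_1^{(\sigma)}(z)$; thus the nonzero eigenvalues of $\wh K_1^{(\sigma)}(z)$ are exactly those of this restriction. If $\ov\la$ and $\ov{g_z}$ are proportional — equivalently, $\omega$ is a.e.\ constant on $\supp\la$ — then $\wh K_1^{(\sigma)}(z)=2c\,|\ov\la\rangle\langle\ov\la|$ for some $c>0$ is a nonnegative rank-one operator, which clearly has exactly one positive eigenvalue. Otherwise $\Dim V=2$, and with respect to the (generally non-orthonormal) basis $\{\ov\la,\ov{g_z}\}$ the restriction is represented by the matrix $\left(\begin{smallmatrix} r & q\\ p & r\end{smallmatrix}\right)$, where $p:=\|\la\|^2$, $q:=\|g_z\|^2$ and $r:=\int|\la(q)|^2/(\omega(q)+\sigma\eps-z)\,\d q>0$. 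Since trace and determinant are independent of the chosen basis, the two real eigenvalues $\mu_1,\mu_2$ of this self-adjoint restriction satisfy $\mu_1+\mu_2=2r$ and $\mu_1\mu_2=r^2-pq$.

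The final step is the sign analysis: the Cauchy--Schwarz inequality gives $r^2\le pq$, and this inequality is strict precisely because $\ov\la$ and $\ov{g_z}$ are linearly independent; hence $\mu_1\mu_2<0$, so exactly one of $\mu_1,\mu_2$ is positive while the other is negative, and $\wh K_1^{(\sigma)}(z)$ has no further nonzero eigenvalue. It then only remains to note that, for all sufficiently large $\alpha>0$ (so that $\Phi_\alpha^{(-)}$ has its unique zero in $(-\infty,-\eps)$, as already assumed from Lemma~\ref{lem:crucial} onward), one has $E_{\sigma\eps}(\alpha)<\sigma\eps$, so that the above applies with $z=E_{\sigma\eps}(\alpha)$. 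The argument is short; the closest thing to an obstacle is the degenerate case of equality in Cauchy--Schwarz, but, as noted, $\wh K_1^{(\sigma)}(z)$ is then a nonzero nonnegative rank-one operator and the conclusion persists.
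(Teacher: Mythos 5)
Your proof is correct and follows essentially the same route as the paper: identify $\wh K^{(\sigma)}_1(E_{\sigma\eps}(\alpha))$ as a rank-two operator, represent it on $\span\{\ov\la,\ov{g_z}\}$ by the matrix $\bigl(\begin{smallmatrix} r & q\\ p & r\end{smallmatrix}\bigr)$, and conclude from $\det=r^2-pq<0$ via Cauchy--Schwarz that exactly one eigenvalue is positive. The only difference is that you also treat the degenerate case where $\ov\la$ and $\ov{g_z}$ are proportional (i.e.\ $\omega$ a.e.\ constant on $\supp\la$), which the paper silently excludes by asserting linear independence without proof; your extra case analysis makes the argument complete under the stated hypotheses.
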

\begin{proof}
	First, we recall that the kernel of the integral operator 
	$\wh K^{(\sigma)}_1(E_{\sigma\eps}(\alpha))$ is given by the function
	\begin{equation}
		(k_1,k_2)\mapsto\ov{\la(k_1)}\la(k_2)\Bigl[\frac{1}{\omega(k_1)
			+\sigma\eps-E_{\sigma\eps}(\alpha)}
		+\frac{1}{\omega(k_2)+\sigma\eps-E_{\sigma\eps}(\alpha)}\Bigr].
	\end{equation}
	Observe that $\wh K^{(\sigma)}_1(E_{\sigma\eps}(\alpha))$ is a well-defined 
	rank-two operator in $L^2(\RR^d)$. In fact, its range coincides with the span 
	of the linearly independent functions $\ov{\la}$ and 
	$\frac{\ov{\la}}{\omega+\sigma\eps-E_{\sigma\eps}(\alpha)}$. It is easy to 
	check that the matrix $M_\alpha^{(\sigma)}=(m_{ij})^2_{i,j=1}$ of 
	$\wh K^{(\sigma)}_1(E_{\sigma\eps}(\alpha))$ with respect to the basis 
	consisting of the latter two functions has the following entries 
	\begin{equation}
		\begin{aligned}
			&m_{11}=\int_{\RR^d}\frac{|\la(q)|^2\,\d q}{\omega(q)
				+\sigma\eps-E_{\sigma\eps}(\alpha)}, \quad
			m_{12}=\int_{\RR^d}\frac{|\la(q)|^2\,\d q}{(\omega(q)+\sigma\eps
				-E_{\sigma\eps}(\alpha))^2},\\
			&m_{21}=\int_{\RR^d}|\la(q)|^2\, \d q, \quad m_{22}=m_{11}.
		\end{aligned}
	\end{equation}
	By the Cauchy-Schwarz inequality, we have
	\begin{equation}
		\det M_\alpha^{(\sigma)}=\bigg(\int_{\RR^d}\frac{|\la(q)|^2\,\d q}{\omega(q)+\sigma\eps-E_{\sigma\eps}(\alpha)}\bigg)^2-\int_{\RR^d}|\la(q)|^2\, \d q\int_{\RR^d}\frac{|\la(q)|^2\,\d q}{(\omega(q)+\sigma\eps-E_{\sigma\eps}(\alpha))^2}
		\leq 0.	
	\end{equation}
	However, the equality sign cannot hold as the functions $\ov{\la}$ 
	and $\frac{\ov{\la}}{\omega+\sigma\eps-E_{\sigma\eps}(\alpha)}$ are linearly independent. 
	Hence, precisely one of the two eigenvalues of the matrix 
	$M_\alpha^{(\sigma)}\in\RR^{2\times2}$ (and thus of the 
	integral operator $\wh K^{(\sigma)}_1(E_{\sigma\eps}(\alpha))$) is positive. 
\end{proof}
\paragraph{Proof of Theorem~\ref{thm:main}.}
In view of Lemma~\ref{lem:positive.num.range}, we have 
\begin{equation}
	\begin{aligned}
		R_{\alpha}^{(\sigma)}(E_{\sigma\eps}(\alpha))
		&=\Delta_{\alpha}^{(\sigma)}(E_{\sigma\eps}(\alpha))
		-\alpha^2K^{(\sigma)}_1(E_{\sigma\eps}(\alpha))
		-\alpha^2K^{(\sigma)}_1(E_{\sigma\eps}(\alpha))\\
		&\geq -\alpha^2K^{(\sigma)}_1(E_{\sigma\eps}(\alpha))
	\end{aligned}
\end{equation}
for all sufficiently large $\alpha>0$. Hence, the variational 
principle (see e.g.~\cite{Reed-Simon-IV})
and Lemma~\ref{lem:K1.has.1.EV} 
imply that
\begin{equation}\label{Schur.has.atm1.EV}
	N\bigl(0; R_{\alpha}^{(\sigma)}(E_{\sigma\eps}(\alpha))\bigr)\\
	\leq  N\bigl(0; -\alpha^2\wh K^{(\sigma)}_1(E_{\sigma\eps}(\alpha))\bigr)=1.
\end{equation}
Combining \eqref{diagonalization.of.Hamiltonian}, Lemma~\ref{lem:crucial} and 
\eqref{Schur.has.atm1.EV}, we conclude that 
\begin{equation}
	\begin{aligned}
		N\bigl(E(\alpha); H_{\alpha}\bigr) &\leq 
			\sum_{\sigma=\pm} N\bigl(E_{\sigma\eps}(\alpha); H_{\alpha}^{(\sigma)}\bigr)\\
		&= \sum_{\sigma=\pm} N\bigl(0; R_{\alpha}^{(\sigma)}(E_{\sigma\eps}(\alpha))\bigr)\\
		&\leq2.
	\end{aligned}
\end{equation}
This completes the proof of Theorem~\ref{thm:main}.
\qed
\subsection*{Acknowledgments}
The author is grateful to Prof.~Alex Sobolev and Prof.~Pavel Exner 
for stimulating discussions and thanks Dr.~Jean-Claude Cuenin 
for reading the present version of the manuscript and making 
constructive remarks.
%
%
%
{\small
	\bibliographystyle{acm}
	\bibliography{spin_boson_literature_20190202}
}
\end{document}